\newtheorem{thm}{Theorem}[section]
\newtheorem{lemma}[thm]{Lemma}
\newtheorem{pro}[thm]{Proposition}
\newenvironment{proof} {\par \noindent \textbf{Proof: }}{\QED \par \bigskip \par}
\newcommand{\QED}{\hfill$\square$}
\newcommand{\rz}{\vspace{0.1cm}}
\title {
\bf{On the clique number of integral circulant graphs}
\thanks{The authors gratefully acknowledge support from
    research projects 144011 and 144007 of the Serbian Ministry of
    Science and Environmental Protection.}
}
\author {
{\large Milan Ba\v si\' c} \\
{\em Faculty of Sciences and Mathematics, University of Ni\v s, Serbia} \\
{e-mail: \texttt{basic\_milan@yahoo.com}} \and
{\large Aleksandar Ili\' c} \footnotemark [3] \\
{\em Faculty of Sciences and Mathematics, University of Ni\v s, Serbia} \\
{e-mail: \texttt{aleksandari@gmail.com} } }
\begin{document}

\maketitle

\begin{abstract}
The concept of gcd-graphs is introduced by Klotz and Sander, which
arises as a generalization of unitary Cayley graphs. The gcd-graph
$X_n (d_1,...,d_k)$ has vertices $0,1,\ldots,n-1$, and two vertices
$x$ and $y$ are adjacent iff $\gcd(x-y,n)\in D =
\{d_1,d_2,...,d_k\}$. These graphs are exactly the same as circulant
graphs with integral eigenvalues characterized by So. In this paper
we deal with the clique number of integral circulant graphs and
investigate the conjecture proposed in \cite{klotz07} that clique
number divides the number of vertices in the graph $X_n (D)$. We
completely solve the problem of finding clique number for integral
circulant graphs with exactly one and two divisors. For $k \geqslant
3$, we construct a family of counterexamples and disprove the conjecture
in this case.
\end{abstract}

\footnotetext[3] { Corresponding author. If possible, send your
correspondence via e-mail. Otherwise, the postal address is:
Department of Mathematics and Informatics, Faculty of Sciences and
Mathematics, Vi\v segradska 33, 18000 Ni\v s, Serbia }

\section{Introduction}

Integral circulant graphs have been proposed as potential candidates
for modeling quantum spin networks that might enable the perfect
state transfer between antipodal sites in a network. Motivated by
this, Saxena, Severini and Shraplinski \cite{saxena07} studied some
properties of integral circulant graphs --- bounds for number of
vertices and diameter, bipartiteness and perfect state transfer.
Stevanovi\'c, Petkovi\'c and Ba\v si\'c \cite{stevanovic08} improved
the previous upper bound for diameter and showed that the diameter
of these graphs is at most $O(\ln \ln n)$.  Circulant graphs are
important class of interconnection networks in parallel and
distributed computing (see \cite{hwang03}). \rz

Various properties of unitary Cayley graphs as a subclass of
integral circulant graphs were investigated in some recent papers.
In the work of Berrizbeitia and Giudici \cite{berrizbeitia04} and in
the later paper of Fuchs \cite{fuchs05}, some lower and upper
bounds for the longest induced cycles were given. Stevanovi\'c, Petkovi\'c and
Ba\v si\'c \cite{stevanovic08a} established a characterization of
integral circulant graphs which allows perfect state transfer and
proved that there is no perfect state transfer in the class of
unitary Cayle graphs except for hypercubes $K_2$ and $C_4$. Klotz
and Sander \cite{klotz07} determined the diameter, clique number,
chromatic number and eigenvalues of unitary Cayley graphs. The
latter group of authors proposed a generalization of unitary Cayley
graphs named {\it gcd-graphs} and proved that they have to be
integral. Integral circulant graphs were characterized by So
\cite{so06} --- a circulant graph is integral if and only if it is a
gcd-graph. This is the solution to the second proposed question in
\cite{klotz07}. \rz

Motivated by the third concluding problem in \cite{klotz07}, we
investigate the clique number of integral circulant graphs $X_n
(D)$, where $D = \{d_1, d_2, \ldots, d_k\}$ and the numbers $d_i$ are
proper divisors of $n$. In Section 2 we extend the result of clique
number and chromatic number for unitary Cayley graphs that are not
connected. In Section 3 we completely characterize the clique number
for integral circulant graphs with exactly two divisors $X_2 (d_1,
d_2)$. In previous cases when $k = 1$ or $k = 2$, the conjecture
that the clique number of a graph $X_n (d_1, d_2, \ldots, d_k)$ must
divide $n$ is supported by Lemma \ref{unconnected unitary} and
Theorem \ref{main k=2}. In Section 4 we refute the conjecture for $k
\geqslant 3$ by constructing a class of counterexamples for $k = 3$
and $k = 4$. In Section 5 we propose a simple lower and upper bound
for $\omega (X_n (d_1, d_2, \ldots, d_k))$, where $k$ is an
arbitrary natural number.

\section{Preliminaries}

Let us recall that for a positive integer $n$ and subset $S
\subseteq \{0, 1, 2, \ldots, n - 1\}$, the circulant graph $G(n, S)$
is the graph with $n$ vertices, labeled with integers modulo $n$,
such that each vertex $i$ is adjacent to $|S|$ other vertices $\{ i
+ s \pmod n \ | \ s \in S\}$. The set $S$ is called a symbol of
$G(n, S)$. As we will consider only undirected graphs, we assume
that $s \in S$ if and only if $n - s \in S$, and therefore the
vertex $i$ is adjacent to vertices $i \pm s \pmod n$ for each $s \in
S$. \rz

Recently, So \cite{so06} has characterized integral circulant
graphs. Let
$$ G_n(d) = \{ k\ | \ gcd(k, n)=d, \ 1 \leq k < n \} $$
be the set of all positive integers less than $n$ having the same
greatest common divisor $d$ with $n$. Let $D_n$ be the set of
positive divisors $d$ of $n$, with $d \leqslant \frac{n}{2}$.
\begin{thm}[\cite{so06}]
    A circulant graph $G(n,S)$ is integral if and only if
    $$ S = \bigcup_{d \in D} G_n(d) $$
    for some set of divisors $D \subseteq D_n$.
    \label{so}
\end{thm}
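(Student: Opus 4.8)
The plan is to prove both implications through the eigenvalues of the circulant adjacency matrix. Recall that $G(n,S)$ has eigenvalues $\lambda_j = \sum_{s \in S}\omega^{js}$ for $j = 0,1,\ldots,n-1$, where $\omega = e^{2\pi i/n}$, and that each $\lambda_j$, being a sum of roots of unity, is an algebraic integer. Hence the first observation to record is that $G(n,S)$ is integral if and only if every $\lambda_j$ is a \emph{rational} number; this reduction is what makes the Galois-theoretic argument below available.

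For the ``if'' direction, suppose $S = \bigcup_{d \in D} G_n(d)$, a union that is disjoint since an element of $\{1,\ldots,n-1\}$ has a unique gcd with $n$. Then $\lambda_j = \sum_{d \in D}\sum_{s \in G_n(d)}\omega^{js}$, and for a fixed $d$ the substitution $s = dm$ with $\gcd(m, n/d)=1$, $1 \leq m < n/d$, turns the inner sum into $\sum_{\gcd(m,\,n/d)=1}\zeta^{jm}$ where $\zeta = \omega^{d}$ is a primitive $(n/d)$-th root of unity. This is exactly the Ramanujan sum $c_{n/d}(j)$, which is a (rational) integer by the classical closed form $\mu\bigl(\tfrac{n/d}{\gcd(j,n/d)}\bigr)\,\varphi(n/d)/\varphi\bigl(\tfrac{n/d}{\gcd(j,n/d)}\bigr)$. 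Summing over $d \in D$ shows $\lambda_j \in \mathbb{Z}$ for every $j$, so $G(n,S)$ is integral.

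For the ``only if'' direction, the idea is to exploit the Galois action of $(\mathbb{Z}/n\mathbb{Z})^{\ast}$ on $\mathbb{Q}(\omega)$. For a unit $a$ the automorphism $\sigma_a\colon \omega \mapsto \omega^{a}$ satisfies $\sigma_a(\lambda_j) = \sum_{s \in S}\omega^{ajs} = \lambda_{aj}$. Since each $\lambda_j$ is rational, it is fixed by every $\sigma_a$, whence $\lambda_{aj} = \lambda_j$ for all $j$ and all units $a$. Now recover the indicator function of $S$ by Fourier inversion, $\mathbf{1}_S(x) = \tfrac1n\sum_{j}\lambda_j\,\omega^{-jx}$; replacing $x$ by $ax$ and re-indexing via $j \mapsto a^{-1}j$ gives $\mathbf{1}_S(ax) = \tfrac1n\sum_{j}\lambda_{a^{-1}j}\,\omega^{-jx} = \mathbf{1}_S(x)$. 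Thus $S$ is invariant under multiplication by every unit modulo $n$; since the orbit of $s$ under this action is precisely $\{t : \gcd(t,n)=\gcd(s,n)\} = G_n(\gcd(s,n))$, the set $S$ is a union of sets $G_n(d)$, and taking $D$ to be the set of divisors $d$ with $G_n(d) \subseteq S$ (each a proper divisor of $n$, hence in $D_n$, and the requirement $s \in S \iff n-s \in S$ imposes nothing new because $\gcd(s,n) = \gcd(n-s,n)$ makes every $G_n(d)$ symmetric) completes the proof.

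The routine parts are the Ramanujan-sum evaluation and the index bookkeeping in the Fourier inversion. The conceptual heart, and the step I expect to be the main obstacle to phrase cleanly, is the passage from ``all eigenvalues rational'' to ``$S$ is a union of gcd-classes'': one must correctly package the equivariance $\sigma_a(\lambda_j) = \lambda_{aj}$ together with Fourier inversion to deduce invariance of $S$ under the unit action, and verify that the combinatorial object so produced indeed matches the description $S = \bigcup_{d\in D}G_n(d)$ with $D \subseteq D_n$.
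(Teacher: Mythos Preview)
The paper does not prove this theorem at all; it is quoted from \cite{so06} as a known characterization and used as a black box. There is therefore nothing in the paper to compare your argument against.

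That said, your proof is correct and is essentially the standard route. The ``if'' direction via Ramanujan sums is the classical computation. For the ``only if'' direction, your packaging of the Galois equivariance $\sigma_a(\lambda_j)=\lambda_{aj}$ together with Fourier inversion to deduce $\mathbf{1}_S(ax)=\mathbf{1}_S(x)$ is clean and valid; the only fact you invoke without justification is that the orbit of $s$ under multiplication by $(\mathbb{Z}/n\mathbb{Z})^{\ast}$ is exactly $G_n(\gcd(s,n))$, which is standard (the reduction map $(\mathbb{Z}/n\mathbb{Z})^{\ast}\to(\mathbb{Z}/(n/d)\mathbb{Z})^{\ast}$ is surjective, so orbit and gcd-class both have size $\varphi(n/d)$). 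One cosmetic point: when you say each resulting $d$ lies in $D_n$, you should note that $0\notin S$ forces $d\neq n$, hence $n/d\ge 2$ and $d\le n/2$ as required.
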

Let $\Gamma$ be a multiplicative group with identity $e$. For
$S\subset \Gamma$, $e\not\in S$ and $S^{-1} = \{s^{-1}\ |\ s\in
S\}=S$, Cayley graph $X = Cay(\Gamma,S)$ is the undirected graph
having vertex set $V(X)=\Gamma$ and edge set $E(X) = \{\{a,b\}\ |\
ab^{-1}\in S\}$. For a positive integer $n > 1$ the unitary Cayley
graph $X_n = Cay(Z_n, \ U_n)$ is defined by the additive group of
the ring $Z_n$ of integers modulo $n$ and the multiplicative group
$U_n = Z_n^{*}$ of its units. \rz

Let $D$ be a set of positive, proper divisors of the integer $n >
1$. Define the gcd-graph $X_n(D)$ to have vertex set $Z_n = \{0, 1,
\ldots, n - 1 \}$ and edge set
$$E (X_n (D)) = \left \{ \{a, b \} \mid a, b \in Z_n, \ gcd (a - b, n) \in D \right \}.$$

If $D = \{d_1, d_2, \ldots, d_k \}$, then we also write $X_n (D) =
X_n (d_1, d_2, \ldots, d_k )$; in particular $X_n (1) = X_n$.
Throughout the paper, we let $n = p_1^{\alpha_1} p_2^{\alpha_2}
\cdot \ldots \cdot p_k^{\alpha_k}$, where $p_1 < p_2 < \ldots < p_k$
are distinct primes, and $\alpha_i \geqslant 1$. Also $f (n)$
represents the smallest prime divisor of $n$. By Theorem \ref{so} we
obtain that integral circulant graphs are Cayley graphs of the
additive group of $Z_n$ with respect to the Cayley set $S =
\bigcup_{d\in D}G_n(d)$ and thus they are gcd-graphs. From Corollary
4.2 in \cite{hwang03}, the graph $X_n(D)$ is connected if and only
if $gcd(d_1,d_2,\ldots,d_k)=1$.

\begin{thm}
\label{d components}
If $d_1, d_2, \ldots, d_k$ are divisors of $n$, such that
the greatest common divisor $gcd (d_1, d_2, \ldots, d_k)$ equals
$d$, then the graph $X_n (d_1, d_2, \ldots, d_k)$ has exactly
$d$ isomorphic connected components of the form
$X_{n / d} (\frac{d_1}{d}, \frac{d_2}{d}, \ldots, \frac{d_k}{d})$.
\end{thm}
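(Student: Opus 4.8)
The plan is to realize the $d$ components explicitly as the cosets of the subgroup $dZ_n\le Z_n$, and then to identify the subgraph induced on each coset with $X_{n/d}(d_1/d,\ldots,d_k/d)$. Note first that $d=\gcd(d_1,\ldots,d_k)$ divides $n$, since $d\mid d_1\mid n$; hence $H=dZ_n=\{0,d,2d,\ldots,(n/d-1)d\}$ is a subgroup of $Z_n$ of order $n/d$, with exactly $d$ cosets $C_r=r+H$ for $r=0,1,\ldots,d-1$. The elementary fact driving the argument is the identity $\gcd(md\bmod n,\;n)=d\cdot\gcd\bigl(m\bmod (n/d),\;n/d\bigr)$ for every integer $m$, which I would record once and use throughout.

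Using this identity, an edge $\{a,b\}$ of $X_n(D)$ satisfies $\gcd(a-b,n)=d_i$ for some $i$, and $d\mid d_i$, so $d\mid a-b$; thus $a$ and $b$ lie in the same coset $C_r$, and no edge of $X_n(D)$ joins two distinct cosets. Consequently $X_n(D)$ is the disjoint union of the $d$ subgraphs induced on $C_0,\ldots,C_{d-1}$. Next I would show these are pairwise isomorphic and each isomorphic to $X_{n/d}(d_1/d,\ldots,d_k/d)$: the map $\varphi_r\colon r+jd\mapsto j$ ($0\le j<n/d$) is a bijection from $C_r$ onto $Z_{n/d}$, and since $(r+jd)-(r+ld)=(j-l)d$, the identity above gives that $r+jd$ and $r+ld$ are adjacent in $X_n(D)$ if and only if $\gcd\bigl((j-l)\bmod (n/d),\,n/d\bigr)\in\{d_1/d,\ldots,d_k/d\}$, i.e.\ if and only if $j$ and $l$ are adjacent in $X_{n/d}(d_1/d,\ldots,d_k/d)$. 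One also checks that each $d_i/d$ is a proper divisor of $n/d$: from $d\mid d_i\mid n$ and $d_i\le n/2$ we get $d_i/d\mid n/d$ and $d_i/d\le (n/d)/2$.

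Finally I would invoke connectedness. Since $\gcd(d_1/d,\ldots,d_k/d)=\gcd(d_1,\ldots,d_k)/d=1$, Corollary 4.2 of \cite{hwang03} (quoted in the Preliminaries) shows $X_{n/d}(d_1/d,\ldots,d_k/d)$ is connected. Hence each $C_r$ induces a connected subgraph, and since there are no edges between cosets, these subgraphs are precisely the connected components of $X_n(D)$. There are $d$ of them, all isomorphic to $X_{n/d}(d_1/d,\ldots,d_k/d)$, which is the assertion.

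The only step demanding genuine care is the gcd identity $\gcd(md\bmod n,n)=d\,\gcd(m\bmod (n/d),n/d)$ and, relatedly, keeping the reductions modulo $n$ and modulo $n/d$ consistent when transporting adjacency along $\varphi_r$; once that is in place, the remainder is routine bookkeeping on cosets together with the cited connectedness criterion.
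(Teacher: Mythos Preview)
Your argument is correct. Both you and the paper begin by observing that all edges lie within the cosets $r+dZ_n$, but you diverge on how to establish that each coset is a single connected component isomorphic to $X_{n/d}(d_1/d,\ldots,d_k/d)$. The paper proves connectedness of each coset directly, using B\'ezout's identity to write $d=x_1d_1+\cdots+x_kd_k$ and then walking from $r$ to any $r+kd$ by stringing together steps of sizes $d_1,\ldots,d_k$; the isomorphism with $X_{n/d}(D/d)$ is left implicit. You instead set up the explicit bijection $\varphi_r\colon r+jd\mapsto j$, verify via the identity $\gcd(md\bmod n,n)=d\,\gcd(m\bmod(n/d),n/d)$ that it is a graph isomorphism onto $X_{n/d}(d_1/d,\ldots,d_k/d)$, and then invoke the connectedness criterion already quoted in the Preliminaries (since $\gcd(d_i/d)=1$). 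Your route is a bit more structural and has the advantage of actually proving the isomorphism claim in the statement, whereas the paper's B\'ezout walk is more self-contained and does not appeal back to the external criterion.
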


\begin{proof}
If $gcd (d_1, d_2, \ldots, d_k) = d > 1$, in the graph $X_n (D)$ there are at least $d$
connected components. We will prove that the subgraph induced by vertices $\{ r,
d + r, 2d + r, \ldots, (\frac{n}{d} - 1) \cdot d + r\}$ is
connected, by constructing a path from vertex $r$ to every other vertex
in this component.\rz

The B\'{e}zout's identity states that for integers $a$
and $b$ one can find integers $x$ and $y$, such that $ax + by =
gcd (a, b)$. By induction, we will prove that there are integers
$x_1, x_2, \ldots, x_k$ such that $x_1 \cdot d_1 + x_2 \cdot d_2
+ \ldots + x_k \cdot d_k = d$. For $k > 2$, we can find integers $y_1,
y_2, \ldots y_{k - 1}$ such that $y_1 \cdot d_1 + y_2 \cdot d_2
+ \ldots + y_{k - 1} \cdot d_{k - 1} = gcd (d_1, d_2, \ldots,
d_{k - 1})$. Applying B\'{e}zout's identity on numbers $gcd (d_1, d_2, \ldots,
d_{k - 1})$ and $d_k$, it follows that
$$d = x \cdot gcd (d_1, d_2, \ldots, d_{k - 1}) + y \cdot d_k = x y_1 \cdot d_1 + x y_2 \cdot d_2
+ \ldots + x y_{k - 1} \cdot d_{k - 1} + y \cdot d_k.$$
Furthermore, for $i=1,2,\ldots, k$ and $j=0,1,\ldots, n-1$, let
$$H_n(d_i)=\{ h \mid 0 \leqslant h < n, \ h \equiv 0 \pmod {d_i} \} \subseteq Z_{n}, $$
and let $j+H_n(d_i)$ denote the subgraph of $X_n (D)$ with the vertex set
$\{ j + h \mid h \in H_n(d_i) \}$. Two vertices $j + h_1$
and $j + h_2$ are adjacent if $h_2-h_1\in G_n(d_i)$.
Thus, from vertex $r$ we can walk to every vertex with label $r
+ k \cdot d$, where $0 \leqslant k \leqslant \frac{n}{d}$, passing through
subgraphs $H_n (d_1), H_n (d_2),\ldots, H_n (d_k)$ consecutively.
\end{proof}

In \cite{klotz07} authors proved the following result for unitary
Cayley graphs.

\begin{thm}
If $D = \{ 1 \}$ then $\chi (X_n) = \omega (X_n) = f(n)$.
\label{divisorOne}
\end{thm}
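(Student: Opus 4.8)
The plan is to pin down $f(n)$ by squeezing it between a clique lower bound and a colouring upper bound, exploiting the universally valid inequality $\omega(X_n)\leqslant\chi(X_n)$. Throughout, write $p=f(n)$ for the least prime divisor of $n$, and recall that in $X_n=X_n(1)$ two vertices $x,y$ are adjacent exactly when $\gcd(x-y,n)=1$.

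First I would exhibit an explicit clique of size $p$, namely the vertex set $\{0,1,\ldots,p-1\}$. For any two of these, say $i<j$, the difference satisfies $1\leqslant j-i\leqslant p-1$, so every prime factor of $j-i$ is strictly smaller than $p$; since $p$ is the smallest prime dividing $n$, no such prime divides $n$, whence $\gcd(j-i,n)=1$ and $i\sim j$ in $X_n$. This shows $\omega(X_n)\geqslant p$.

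Next I would construct a proper colouring of $X_n$ with exactly $p$ colours by assigning to each vertex $v$ the residue $v\bmod p$. If $x\sim y$ then $\gcd(x-y,n)=1$, and in particular $p\nmid x-y$, i.e. $x\not\equiv y\pmod p$; hence adjacent vertices get different colours and $\chi(X_n)\leqslant p$. Combining this with the clique bound and with $\omega(X_n)\leqslant\chi(X_n)$ yields $p\leqslant\omega(X_n)\leqslant\chi(X_n)\leqslant p$, which forces $\omega(X_n)=\chi(X_n)=p=f(n)$.

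There is no genuine obstacle in this argument; its only noteworthy feature is that a single arithmetic observation — reduction modulo the least prime divisor of $n$ — simultaneously produces an extremal clique and an optimal colouring, and it is exactly this coincidence that makes $\omega(X_n)$ and $\chi(X_n)$ agree. The one point to state carefully is why the prime factors of a positive integer below $p$ cannot divide $n$, which is immediate from the minimality of $p=f(n)$.
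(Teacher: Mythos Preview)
Your argument is correct. Note, however, that the paper does not supply its own proof of this statement: it is quoted as a result of Klotz and Sander \cite{klotz07}, so there is no in-paper proof to compare against. Your approach --- exhibiting the clique $\{0,1,\ldots,p-1\}$ and the proper colouring by residues modulo $p=f(n)$, then sandwiching via $\omega\leqslant\chi$ --- is exactly the standard proof (and is the one given in \cite{klotz07}).
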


Consider the set $D = \{ d \}$, where $d \geqslant 1$ is a divisor
of $n$. The graph $X_n (d)$ has $d$ connected components - the
residue classes modulo $d$ in $Z_n = \{0, 1, 2, \ldots, n - 1 \}$.
The degree of every vertex is $\phi (\frac{n}{d})$ where $\phi (n)$
denotes the Euler phi function.

\begin{lemma}
\label{unconnected unitary}
    For the gcd-graph $X_n (d)$ it holds that:
    $$\chi (X_n (d)) = \omega (X_n (d)) = f \left ( \frac{n}{d} \right ).$$
\end{lemma}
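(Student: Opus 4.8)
The plan is to reduce the claim to the result already established for unitary Cayley graphs (Theorem \ref{divisorOne}) by means of the component decomposition of Theorem \ref{d components}. Applying Theorem \ref{d components} with the singleton divisor set $D = \{d\}$, whose greatest common divisor is $d$ itself, we obtain that $X_n(d)$ splits into exactly $d$ pairwise isomorphic connected components, each of the form $X_{n/d}(d/d) = X_{n/d}(1) = X_{n/d}$; that is, each component is a copy of the unitary Cayley graph on $n/d$ vertices.

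To keep the argument self-contained one can alternatively exhibit the isomorphism explicitly: the component induced on the residue class $\{0, d, 2d, \ldots, (\frac{n}{d}-1)d\}$ is carried onto $X_{n/d}$ by the map $k \mapsto kd \pmod n$, since $\gcd(ad - bd, n) = d$ holds precisely when $\gcd(a-b, n/d) = 1$. The remaining $d-1$ components are translates of this one, hence isomorphic to it, recovering the conclusion of Theorem \ref{d components} in this special case.

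Next I would invoke Theorem \ref{divisorOne} to conclude $\chi(X_{n/d}) = \omega(X_{n/d}) = f(n/d)$ for each component. Finally I would use the elementary fact that both the chromatic number and the clique number of a disjoint union of graphs equal the maximum of the respective parameter over the components; since all $d$ components here are isomorphic, this maximum is simply the common value $f(n/d)$, yielding $\chi(X_n(d)) = \omega(X_n(d)) = f(n/d)$.

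There is essentially no serious obstacle. The only points requiring (routine) care are the verification that $k \mapsto kd$ is genuinely a graph isomorphism onto a component — equivalently, that the components furnished by Theorem \ref{d components} really are copies of $X_{n/d}$ — and the standard observation that $\chi$ and $\omega$ are attained on some connected component. Both are immediate once written out, so the proof is short.
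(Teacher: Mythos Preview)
Your proposal is correct and follows essentially the same approach as the paper: both arguments identify each component of $X_n(d)$ with the unitary Cayley graph $X_{n/d}$ via the map $a \mapsto ad$ (using $\gcd(d(a-b),n)=d\cdot\gcd(a-b,n/d)$) and then invoke Theorem~\ref{divisorOne}. The only cosmetic difference is that the paper separately exhibits the clique $\{0,d,\ldots,(p-1)d\}$ for the lower bound, whereas you read both bounds off from the component isomorphism directly.
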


\begin{proof}
Let $p = f (\frac{n}{d})$ be the smallest prime divisor of $\frac n
d$. The vertices $0, d, 2d, \ldots, (p - 1) d$ induce a clique in
the graph $X_n (d)$, because the greatest common divisor of $d \cdot
(a - b)$ and $n$ equals $d$, for $0 < a, b < p$. Therefore, we have
inequality $\chi (X_n (d)) \geqslant \omega (X_n (d)) \geqslant p$.
\rz

On the other hand, consider the component with the vertices $r, d+r,
2d+r, \ldots, (\frac{n}{d} - 1) d+r$, for some $r\in Z_d$. Two
vertices $d \cdot a+r$ and $d \cdot b+r$ are adjacent if and only
if $gcd (a - b, \frac{n}{d}) = 1$, which is evidently true because
$|a - b| < \frac{n}{d}$. From Theorem \ref{divisorOne}, we get that
chromatic number in such component is the least prime dividing
$\frac{n}{d}$. The same observation holds for every residue class
modulo $d$, by Theorem \ref{d components}. Thus, the chromatic
number and the clique number in $X_n (d)$ are equal to $f
(\frac{n}{d})$.
\end{proof}

\section{Clique number for $k = 2$}

Let $D$ be a two element set $D = \{ d_1, d_2 \}$, where $d_1
> d_2$. Let $Q$ be the set of all prime divisors of $n$ that does not divide
$d$. The main result of this section is following theorem.

\begin{thm}
In the graph $X_n (d_1, d_2)$ we have:
\begin{eqnarray*}
\omega \left( X_n (d_1, d_2) \right) = \left\{
\begin{array}{rl}
\min \ \left ( \min_{p \in Q} p, f \left( n \right) \cdot f \left ( \frac
{n}{d}\right) \right), & \mbox { if $d_2 = 1$, } \\
\omega (X_{\frac{n}{d_2}} (1, \frac{d_1}{d_2})), & \mbox{ if $d_2 \mid d_1$ and $d_2 > 1$, } \\
\max \left ( f \left( \frac {n}{d_1}
\right), f \left(\frac {n}{d_2} \right) \right ), & \mbox{ otherwise. }\\
\end{array} \right.
\end{eqnarray*}
\end{thm}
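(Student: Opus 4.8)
The plan is to split into the three cases of the statement, relying throughout on one elementary fact about a clique $C$ of $X_n(d_1,d_2)$: for $u,v\in C$ we have $\gcd(u-v,n)\in\{d_1,d_2\}$, so $\gcd(d_1,d_2)\mid u-v$, and once we know which $d\in\{d_1,d_2\}$ divides $u-v$ the edge condition is equivalent to $\gcd((u-v)/d,\ n/d)=1$. The first two cases are short consequences of this together with Lemma \ref{unconnected unitary} and Theorem \ref{d components}; the case $d_2=1$ carries the real content.

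For the ``otherwise'' case ($d_2>1$, $d_2\nmid d_1$) I would first show that no clique contains both a ``$d_1$-edge'' and a ``$d_2$-edge''. Since in a clique any two edges lie in a common triangle, it is enough to forbid a triangle $\{x,y,z\}$ with $\gcd(x-y,n)=d_1$ and $\gcd(x-z,n)=d_2$: then $\gcd(d_1,d_2)\mid y-z$, and whichever of $d_1,d_2$ equals $\gcd(y-z,n)$, one gets that $\operatorname{lcm}(d_1,d_2)$ divides $x-z$ or $x-y$, hence divides $\gcd$ of that difference with $n$ (as $\operatorname{lcm}(d_1,d_2)\mid n$), forcing either $d_1\le d_2$ or $d_2\mid d_1$ --- both impossible. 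Hence every clique is a clique of $X_n(d_1)$ or of $X_n(d_2)$, and Lemma \ref{unconnected unitary} gives $\omega=\max(f(n/d_1),f(n/d_2))$. For the case $d_2\mid d_1$, $d_2>1$, the dilation $x\mapsto d_2x$ identifies $X_{n/d_2}(1,d_1/d_2)$ with the subgraph of $X_n(d_1,d_2)$ induced on the multiples of $d_2$, since $\gcd(d_2a-d_2b,n)=d_2\gcd(a-b,n/d_2)$ lies in $\{d_1,d_2\}$ iff $\gcd(a-b,n/d_2)\in\{1,d_1/d_2\}$; this gives ``$\ge$''. For ``$\le$'', every edge of $X_n(d_1,d_2)$ joins vertices congruent modulo $d_2$, so any clique lies in a single residue class modulo $d_2$, which by Theorem \ref{d components} is an isomorphic copy of that induced subgraph.

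For $d_2=1$ (so $d=d_1$, and $Q$ is the set of primes of $n$ not dividing $d_1$) I would prove the two bounds separately. For the upper bound: if $p\in Q$ then for every edge $\{u,v\}$ one has $p\nmid u-v$ (both $\gcd(u-v,n)=1$ and $\gcd(u-v,n)=d_1$ force $v_p(u-v)=0$, since $v_p(n)\ge1$), so a clique has at most $p$ vertices, giving $\omega\le\min_{p\in Q}p$; and, splitting a clique $C$ by residues modulo $d_1$, each class yields after division by $d_1$ a clique of $X_{n/d_1}$ (size $\le f(n/d_1)$), while one vertex from each nonempty class forms a clique of $X_n$ (size $\le f(n)$, by Theorem \ref{divisorOne}), so $|C|\le f(n)\,f(n/d_1)$.

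For the lower bound one must exhibit a clique of size $N=\min(\min_{p\in Q}p,\ f(n)f(n/d_1))$. Write $N=cL+s$ with $L=f(n/d_1)$ and $0\le s<L$; then $c\le f(n)$, and $c+1\le f(n)$ when $s>0$. I would take $M$ blocks $B_j=s_j+\{0,d_1,2d_1,\dots,(L_j-1)d_1\}$ with $L_j=L$ for $j<c$ and $L_c=s$, choosing the representatives $s_j$ by the Chinese Remainder Theorem so that $s_j\equiv jLd_1\pmod p$ for each $p\in Q$ and $s_j\equiv j\pmod p$ for each prime $p\mid d_1$. One then checks: within a block the differences are $td_1$ with $1\le t<L\le f(n/d_1)$, whence $\gcd(td_1,n)=d_1$; and across blocks the difference has valuation $0$ at every prime of $n$ (for $p\mid d_1$ this uses $M\le f(n)\le p$, and for $p\in Q$ it uses $N\le p$ together with the asymmetric index window produced by the possibly shorter last block, which keeps $((j-j')L+a-a')$ strictly between $-N$ and $N$). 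Thus $B_0\cup\dots\cup B_{M-1}$ is a clique on exactly $N$ vertices. I expect this final verification --- tracking the spacing so that all cross-differences lie in a window of length less than $p$ for every $p\in Q$ --- to be the main obstacle; everything else is bookkeeping.
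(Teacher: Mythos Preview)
Your proposal is correct and follows essentially the same three-case strategy as the paper: the ``otherwise'' case is the paper's non-monochromatic-triangle argument, the $d_2\mid d_1$ case is the paper's reduction via Theorem~\ref{d components}, and for $d_2=1$ your two upper bounds are exactly Lemmas~\ref{inequality p1pi} and~\ref{p ne deli d} (your residue-class partition is a clean rephrasing of the paper's ``blue cliques'' argument). The one genuine organizational difference is in the lower bound for $d_2=1$: the paper splits into two separate CRT constructions (Theorem~\ref{klika p1pi} when $q>p_1p_i$ and Theorem~\ref{klika q} when $q<p_1p_i$), whereas you give a single construction with blocks of length $L=f(n/d_1)$ plus one possibly shorter final block to hit exactly $N=\min(q,p_1p_i)$ vertices. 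Your unified construction is valid---the window estimate $1\le (j-j')L+(a-a')\le N-1<p$ for $p\in Q$ goes through precisely because the short block sits at the top index---and it is a mild streamlining of the paper's two-case treatment rather than a different idea.
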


According to the definition, the edge set of $X_n (d_1, d_2)$
is the union of the edge sets of graphs $X_n(d_1)$ and $X_n
(d_2)$. We color the edges of the graph $X_n (d_1, d_2)$ with two
colors: edge $\{a, b \}$ is blue if $gcd (a - b, n) = d_1$ and red
if $gcd (a - b, n) = d_2$. Therefore by Lemma \ref{unconnected
unitary},

\begin{equation}
\omega \left( X_n (d_1, d_2) \right) \geqslant \max \left ( f \left(
\frac {n}{d_1} \right), f \left(\frac {n}{d_2} \right) \right ).
\label{lower bound}
\end{equation}

\subsection{ Case $1 \in D$ }

Assume that $D = \{1, d \}$, where $d = p_1^{\beta_1}
p_2^{\beta_2} \cdot \ldots \cdot p_k^{\beta_k}$. Let $i$ be the
first index such that $\beta_i < \alpha_i$ i.e. $f (\frac{n}{d}) =
p_i$. By (\ref{lower bound}), we know that $\omega (X_n (1, d))
\geqslant p_i$.

\begin{lemma}
\label{inequality p1pi} In the graph $X_n (1, d)$ we have
$$\omega (X_n (1, d)) \leqslant f(n) \cdot f \left( \frac{n}{d} \right).$$
\end{lemma}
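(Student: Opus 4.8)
The plan is to bound the size of any clique in $X_n(1,d)$ by analyzing how the two edge colors interact. Let $p_1 = f(n)$ be the smallest prime divisor of $n$ and $p_i = f(n/d)$. Suppose $C$ is a clique; consider the ``red graph'' on $C$ whose edges are the pairs $\{a,b\}$ with $\gcd(a-b,n)=d$. The key structural observation is that this red graph has bounded clique number: if $a_0, a_1, \ldots, a_m$ are vertices pairwise joined by red edges, then $d \mid a_s - a_t$ for all $s,t$, so writing $a_s = a_0 + d\cdot b_s$, we need $\gcd(b_s - b_t, n/d) = 1$ for all $s \neq t$; by Theorem~\ref{divisorOne} applied to $X_{n/d}$, this forces $m + 1 \leqslant f(n/d) = p_i$. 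Dually, the ``blue graph'' on $C$ (edges with $\gcd(a-b,n)=1$) is a subgraph of the unitary Cayley graph $X_n$, so any blue clique has size at most $f(n) = p_1$.

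Now I would combine these two bounds using the edge-coloring of the complete graph on $C$. The natural tool is a Ramsey-type / product argument: since every edge of $K_{|C|}$ is either red or blue, and red cliques have size $\leqslant p_i$ while blue cliques have size $\leqslant p_1$, I want to conclude $|C| \leqslant p_1 \cdot p_i$. The cleanest way to get the \emph{product} bound (rather than the weaker $R(p_1+1, p_i+1)$ bound) is to exhibit on $C$ a proper structure: pick a maximal blue clique $B \subseteq C$; every other vertex of $C$ must be red-adjacent to at least one vertex of $B$ (else it could be added to $B$). Actually the sharper route is to partition $C$ according to blue-connectivity. Consider the blue graph restricted to $C$: I claim its connected components each have size at most $p_1$ is too weak; instead I would look at it the other way. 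Group the vertices of $C$ into classes that are pairwise \emph{red}-connected within the class won't work either since red is not transitive.

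The argument that does work: take the blue graph $G_{\mathrm{blue}}$ on vertex set $C$. Its \emph{complement} within $C$ is exactly the red graph $G_{\mathrm{red}}$ (since $C$ is a clique, every pair is monochromatic). We have $\omega(G_{\mathrm{blue}}) \leqslant p_1$ and $\omega(G_{\mathrm{red}}) = \alpha(G_{\mathrm{blue}}) \leqslant p_i$, so by the trivial bound $|C| = |V(G_{\mathrm{blue}})| \leqslant \chi(G_{\mathrm{blue}}) \cdot \alpha(G_{\mathrm{blue}})$ it suffices to show $\chi(G_{\mathrm{blue}}) \leqslant p_1$. But $G_{\mathrm{blue}}$ is an induced subgraph of the unitary Cayley graph $X_n$, and by Theorem~\ref{divisorOne} we have $\chi(X_n) = f(n) = p_1$; chromatic number does not increase on induced (indeed any) subgraphs, so $\chi(G_{\mathrm{blue}}) \leqslant p_1$. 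Combined with $\alpha(G_{\mathrm{blue}}) \leqslant p_i$ and the elementary inequality $|V| \leqslant \chi \cdot \alpha$ (a proper coloring splits $V$ into at most $\chi$ independent sets, each of size $\leqslant \alpha$), we obtain $|C| \leqslant p_1 \cdot p_i = f(n) \cdot f(n/d)$.

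The main obstacle is the red-clique bound, i.e. verifying carefully that a set of vertices pairwise joined by red edges translates, after dividing by $d$, into a clique in $X_{n/d}$ so that Theorem~\ref{divisorOne} applies; one must check that $\gcd(a_s-a_t,n)=d$ is equivalent to $\gcd(b_s-b_t,n/d)=1$ together with the automatic divisibility $d\mid a_s-a_t$, and that the $b_s$ are distinct residues mod $n/d$. Everything else — the $\chi$-monotonicity and the $|V|\leqslant\chi\cdot\alpha$ inequality — is routine, so the write-up is short once that translation is pinned down.
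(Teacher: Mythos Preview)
Your argument is correct, but it takes a different route from the paper's. The paper exploits a structural feature you noticed and then discarded: the ``gcd equals $d$'' relation (their blue, your red) is \emph{transitive} inside any clique $C^\ast$, since $d\mid a-b$ and $d\mid a-c$ force $d\mid b-c$. Hence the $d$-edges partition $C^\ast$ into vertex-disjoint cliques $K_1,\ldots,K_x$ together with some singletons $C$; all cross-class edges have gcd $1$, so one representative from each class yields a gcd-$1$ clique of size $x+|C|\leqslant p_1$, while each $|K_j|\leqslant p_i$, and a direct count gives $|C^\ast|\leqslant p_1 p_i$. Your approach replaces this decomposition by the inequality $|C|\leqslant \chi(G_{\mathrm{blue}})\cdot\alpha(G_{\mathrm{blue}})$, using the \emph{chromatic} half of Theorem~\ref{divisorOne} ($\chi(X_n)=p_1$) rather than just the clique bound. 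The paper's version is more self-contained, needing only the $\omega$-values from Lemma~\ref{unconnected unitary}; yours is slicker and would generalize whenever one color class sits inside a graph of known chromatic number, without any transitivity. As a minor point, your ``main obstacle'' (bounding the $d$-colored clique by $p_i$) is exactly Lemma~\ref{unconnected unitary}, so the explicit translation to $X_{n/d}$ is already done there.
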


\begin{proof}
Color the edges of the graph $X_n (1, d)$ with two colors. Let blue
edges be those with $gcd (a - b, n) = d$ and red edges those with
$gcd (a - b, n) = 1$. If we have two adjacent blue edges $(a, b)$
and $(a, c)$ then edge joining the vertices $b$ and $c$ must be blue,
if exists. This follows from the fact that if $d \mid a - b$ and $d
\mid a - c$, then $d$ must divide $gcd (b - c, n)$. \rz

Thus, any two maximal cliques composed only of blue edges are vertex disjoint.
Now let $K_1, K_2, \ldots, K_x$ be all the maximal cliques of blue edges in a maximal
clique $C^\ast$ of $X_n(1, d)$. Then any edge joining two vertices in different cliques
$K_i$ and $K_j$ is red. Furthermore, any vertex in $V (C^\ast) \setminus ( V (K_1) \cup V (K_2)
\cup \ldots \cup V (K_x) )$ does not belong to any blue clique, which implies that
$V (C^\ast) \setminus ( V (K_1) \cup V (K_2) \cup \ldots \cup V (K_x) )$
induces a clique composed only of red edges and is denoted by C.
By Lemma \ref{unconnected unitary} the order of $K_i$ ($i\in \{1, 2, \ldots, x\}$)
is at most $p_i = f(\frac{n}{d})$ and the order of $C$ is at most $p_1$. \rz

%By Lemma \ref{unconnected unitary} the size of the maximal clique composed
%only of blue edges is $p_i = f (\frac {n}{d})$, and the size of the
%maximal clique composed only of red edges is $p_1$. Thus, the
%maximal clique in the graph $X_n (1, d)$ contains several cliques
%$K_1, K_2, \ldots, K_x$ with blue edges and possibly one clique $C$
%with red edges. Also, only red edges join vertices in different
%cliques with blue edges.

Let $y$ be the size of clique $C$. If we choose one vertex
from each clique with blue edges, then these $x$
vertices with $y$ vertices from clique $C$ form a clique with red
edges in the graph $X_n (1, d)$. Therefore, $x + y \leqslant p_1$.
The size of every blue clique is bounded by $p_i$. The number of
vertices in the maximal clique of $X_n (1, d)$ is
$$
\sum_{j=1}^{x}|K_j|+|C|\leq x\cdot p_i+y=x\cdot (p_i-1)+(x+y)\leq
p_1\cdot (p_i-1)+p_1=p_i\cdot p_1,
$$
which means that the size of maximal clique is less than or equal to
$f (n) \cdot f (\frac{n}{d})$.
\end{proof}

\begin{figure}[h]
  \center
  \includegraphics [width = 5cm]{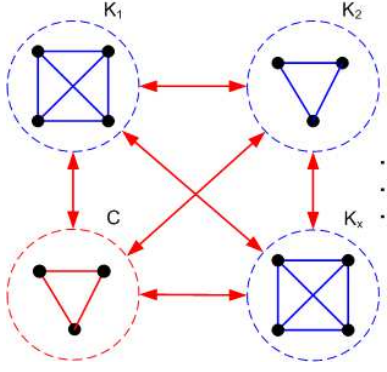}
  \caption { \textit{The maximal clique in graph $X_n (1, d)$}}
\end{figure}

Let $R$ be the set of all prime divisors of $d$ which also
divide $\frac{n}{d}$. Let $q$ be the least prime number from the set
$Q$, if exists. Let number $M$ be the product of all primes in both
sets $R$ and $Q$.

\begin{lemma}
\label{p ne deli d} For an arbitrary divisor $d$ of $n$,
the following inequality holds:
$$\omega (X_n (1, d)) \leqslant \min_{p\in Q} p = q.$$
\end{lemma}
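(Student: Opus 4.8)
The claim is that the clique number of $X_n(1,d)$ cannot exceed any prime $p$ that divides $n$ but does not divide $d$. The natural strategy is to argue by contradiction: suppose $C$ is a clique with $|C| = p$ where $p \in Q$, i.e.\ $p \mid n$ and $p \nmid d$. The plan is to reduce everything modulo $p$ and exploit the pigeonhole principle. Since $p \mid n$, the residue map $Z_n \to Z_p$ is well defined, and I would look at the $p$ vertices of $C$ under this map. The key observation is that if two vertices $a,b$ of $C$ satisfy $a \equiv b \pmod p$, then $p \mid a-b$, and since also $a-b \mid \gcd(a-b,n)$... more carefully: $\gcd(a-b,n)$ is a multiple of $p$ (because $p\mid a-b$ and $p \mid n$), hence $\gcd(a-b,n) \neq 1$ and $\gcd(a-b,n) \neq d$ since $p \nmid d$. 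So $\gcd(a-b,n) \notin \{1,d\}$, contradicting that $\{a,b\}$ is an edge. Therefore all $p$ vertices of $C$ must lie in distinct residue classes mod $p$, so they realize \emph{all} $p$ classes.

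Now I would push the argument one step further to derive the contradiction. Having all residues mod $p$ represented, pick the two vertices $a,b \in C$ with $a \equiv 0$ and $b \equiv 1 \pmod p$ — wait, that only gives $\gcd(a-b,n)$ coprime to $p$, which is consistent. The real contradiction should come from considering a third vertex or from a counting argument: among the $p$ vertices, consider differences. Actually the cleaner route: since $C$ is a clique on $p$ vertices hitting every class mod $p$, translate so that $0 \in C$; then $C = \{0, c_1, \ldots, c_{p-1}\}$ with the $c_i$ covering the nonzero classes mod $p$. For each $i$, $\gcd(c_i, n) \in \{1, d\}$. The set $C' = \{c \in C : \gcd(c,n) = d\}$ induces a blue clique, hence by Lemma~\ref{unconnected unitary} has size at most $f(n/d)$; but more to the point, all elements of $C'$ are divisible by $d$, hence $\equiv 0 \pmod{p'}$ for every prime $p' \mid d$. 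The vertices with $\gcd = 1$ are units mod $n$. I expect the contradiction to emerge by noting that the $p$ residues mod $p$ split into "the one(s) coming from $d$-vertices" and "unit vertices", and a unit vertex is coprime to $p$ so lies in a nonzero class, while checking how many classes each type can occupy forces $p$ to be too small — or alternatively, directly: take any two $d$-divisible vertices; they are $\equiv 0 \bmod p$ only if... no, $d$-divisible does not force divisibility by $p$ since $p\nmid d$.

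Let me reconsider the endgame: the robust version is that \emph{every} pair in $C$ has gcd with $n$ equal to $1$ or $d$, both coprime to $p$, so every difference $a-b$ (with $a,b\in C$ distinct) is coprime to $p$, i.e.\ $a \not\equiv b \pmod p$ — this is exactly the statement that the reduction $C \to Z_p$ is injective, giving $|C| \le p$, which is the bound but not a contradiction. To get the \emph{strict} refinement needed when combined with other lemmas (the theorem statement uses $\min$ over several quantities, so this lemma only needs $\le q$, not $< q$), $|C| \le p$ for every $p \in Q$ is in fact all that is required, so the injectivity argument suffices and there is no further obstacle. I would therefore structure the proof as: (1) note $p \mid n$, $p \nmid d$ for $p \in Q$; (2) for distinct $a,b$ in any clique, $\gcd(a-b,n) \in \{1,d\}$ is coprime to $p$, so $p \nmid a-b$; (3) hence the map $C \to Z_p$ is injective and $\omega(X_n(1,d)) = |C| \le p$; (4) take the minimum over $p \in Q$. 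The only subtlety — and the one place to be careful — is the case analysis on whether $Q$ is empty (then the bound is vacuous / $+\infty$ and there is nothing to prove), which should be stated explicitly.
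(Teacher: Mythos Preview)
Your final four-step summary is correct and is exactly the paper's argument: for any $p\in Q$ and any two distinct clique vertices $a,b$, the condition $\gcd(a-b,n)\in\{1,d\}$ forces $p\nmid a-b$, so reduction modulo $p$ is injective on the clique and $\omega(X_n(1,d))\le p$. The exploratory middle portion (translating so $0\in C$, splitting into blue and unit vertices, etc.) is unnecessary and can be deleted; the injectivity observation alone already yields $|C|\le p$, and your remark that the $Q=\emptyset$ case is vacuous is a good addition.
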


\begin{proof}
Let $p$ be an arbitrary prime number of $n$ which does not divide
$d$. If we assume that maximal clique has more than $p$ vertices,
then there must be two vertices $a$ and $b$ with the same residue
modulo $p$. This means that $gcd (a - b, n)$ is divisible by $p$,
and therefore is equal to neither $1$ nor $d$. Therefore, we have
$\omega (X_n (1, d)) \leqslant p.$
\end{proof}

\begin{thm}
\label{klika p1pi} If $Q$ is an empty set or $q > p_1 \cdot p_i$,
then
$$
\omega (X_n (1, d)) = p_1 \cdot p_i.
$$
\end{thm}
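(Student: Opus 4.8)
By Lemma~\ref{inequality p1pi} one always has $\omega(X_n(1,d))\leqslant p_1 p_i$, and by Lemma~\ref{p ne deli d}, whenever $Q\neq\emptyset$, also $\omega(X_n(1,d))\leqslant q$; since the hypothesis is that $Q=\emptyset$ or $q>p_1p_i$, in either case $\omega(X_n(1,d))\leqslant p_1p_i$. Thus the whole task is to construct a clique on $p_1p_i$ vertices. Before doing that I would record one auxiliary fact: $p_i\mid d$, i.e.\ $\beta_i\geqslant 1$. Indeed $p_i=f(n/d)$ is a prime divisor of $n/d$, so if it did not divide $d$ it would lie in $Q$, forcing $q\leqslant p_i<p_1p_i$ and contradicting $q>p_1p_i$ (and if $Q=\emptyset$ then every prime divisor of $n$ divides $d$, so $\beta_i\geqslant 1$ trivially). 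This is exactly what puts $p_i$ into the set $R$ of primes dividing both $d$ and $n/d$, which the construction relies on.

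Guided by the proof of Lemma~\ref{inequality p1pi} --- there, an extremal clique is a union of $p_1$ pairwise ``red-joined'' blue cliques, each of order $p_i$ --- I would index the desired vertices by pairs $(u,w)$ with $0\leqslant u<p_i$ and $0\leqslant w<p_1$, letting $w$ label the blue clique, and define $v_{u,w}\in Z_n$ coordinate by coordinate through the Chinese Remainder Theorem. Recalling $n=p_1^{\alpha_1}\cdots p_k^{\alpha_k}$ and $d=p_1^{\beta_1}\cdots p_k^{\beta_k}$, set
\begin{eqnarray*}
v_{u,w} & \equiv & p_j^{\beta_j}\,u+w \pmod{p_j^{\alpha_j}} \qquad \text{if $0<\beta_j<\alpha_j$ (that is, $p_j\in R$),}\\
v_{u,w} & \equiv & w \pmod{p_j^{\alpha_j}} \qquad \text{if $\beta_j=\alpha_j$,}\\
v_{u,w} & \equiv & p_i\,w+u \pmod{p_j^{\alpha_j}} \qquad \text{if $\beta_j=0$ (that is, $p_j\in Q$).}
\end{eqnarray*}
(If $Q=\emptyset$ the last line is vacuous.) Since $u<p_i\leqslant p_j$ and $w<p_1\leqslant p_j$ for $p_j\in R$, and $p_iw+u<p_1p_i<q\leqslant p_j$ for $p_j\in Q$, each of these residues is already a genuine element of $\{0,1,\dots,p_j^{\alpha_j}-1\}$, so the $p_j$-adic valuation of any difference $v_{u,w}-v_{u',w'}$ can be read off directly from the formulas.

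I would then verify that every two of these $p_1p_i$ vertices are adjacent in $X_n(1,d)$, by a routine case distinction. If $w=w'$ and $u\neq u'$: on an $R$-coordinate the difference is $p_j^{\beta_j}(u-u')$ with $p_j\nmid(u-u')$ because $0<|u-u'|<p_i\leqslant p_j$, contributing exactly $p_j^{\beta_j}$ to $\gcd(v_{u,w}-v_{u',w'},n)$; on a coordinate with $\beta_j=\alpha_j$ the difference is $0$, contributing $p_j^{\alpha_j}=p_j^{\beta_j}$; on a $Q$-coordinate it is $u-u'\not\equiv 0\pmod{p_j}$, contributing $1=p_j^{\beta_j}$. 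Hence $\gcd(v_{u,w}-v_{u',w'},n)=d$, a blue edge. If $w\neq w'$: on every coordinate the difference is $\not\equiv 0\pmod{p_j}$ --- on an $R$-coordinate it is $\equiv w-w'\pmod{p_j}$ with $0<|w-w'|<p_1\leqslant p_j$ (here $\beta_j\geqslant 1$ is used), on a coordinate with $\beta_j=\alpha_j$ it equals $w-w'$, and on a $Q$-coordinate $|p_i(w-w')+(u-u')|<p_1p_i<q\leqslant p_j$ and it is nonzero --- so $\gcd(v_{u,w}-v_{u',w'},n)=1$, a red edge. As $1,d\in D$, the two (necessarily distinct) vertices are adjacent in both cases, so $\{v_{u,w}\}$ is a clique of order $p_1p_i$ and equality holds.

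The only genuinely non-routine step is guessing these coordinatewise formulas, above all the choice $p_j^{\beta_j}u+w$ on the primes dividing both $d$ and $n/d$: the $u$-term makes the ``$u$-direction'' a blue clique (its step carries exactly the valuation $\beta_j$ needed for the gcd to equal $d$), while the $w$-term, being a unit modulo $p_j$, collapses the gcd to $1$ along the ``$w$-direction''. Everything after that is bookkeeping of $p$-adic valuations, using only $p_i\leqslant p_j$ for every $p_j\mid n/d$, the trivial bound $p_1\leqslant p_j$, and the hypothesis $p_1p_i<q\leqslant p_j$ for $p_j\in Q$ --- which is precisely the place where $q>p_1p_i$ enters, guaranteeing that the $p_1p_i$ labels $p_iw+u$ are pairwise distinct modulo $p_j$.
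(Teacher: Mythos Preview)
Your proof is correct and follows essentially the same strategy as the paper: after invoking Lemma~\ref{inequality p1pi} for the upper bound, both construct a clique of size $p_1p_i$ indexed by pairs $(u,w)\in\{0,\dots,p_i-1\}\times\{0,\dots,p_1-1\}$ via the Chinese Remainder Theorem, with the $u$-direction yielding edges of gcd $d$ and the $w$-direction yielding edges of gcd $1$. The only cosmetic difference is that the paper writes each vertex globally as $a_s\cdot d+r$ (so that the $d$-multiple automatically handles the primes with $\beta_j=\alpha_j$ and those in $R$) and uses the label $s\,p_1+r$ on $Q$-primes, whereas you specify every $p_j^{\alpha_j}$-coordinate separately and use the label $p_i\,w+u$; both choices work because the hypothesis $q>p_1p_i$ makes either family of $p_1p_i$ labels pairwise distinct modulo each $p_j\in Q$.
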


\begin{proof}
According to Lemma \ref{inequality p1pi} it is enough to construct a
clique of size $p_1 \cdot p_i$ with vertices $x_{rs} = a_s \cdot d +
r$, where $0 \leqslant s < p_i$ and $0 \leqslant r < p_1$. We choose
numbers $a_s$ as solutions of the following congruence equations:
\begin{flalign*}
a_s &\equiv s \quad \ \ \pmod p \quad \mbox { for every } p \in R \\
a_s \cdot d &\equiv s \cdot p_1 \pmod p \quad \mbox { for every } p
\in Q
\end{flalign*}

This linear congruence system has a solution if and only if
$gcd(d,p)\mid s\cdot p_1$ for $p\in Q$. The last relation is
trivially satisfied since $d$ and $p\in Q$ are relatively prime.
Therefore, using Chinese reminder theorem we can uniquely
determinate numbers $a_s$ modulo $M$.

Consider an arbitrary difference $\Delta = x_{rs} - x_{r's'} = d
\cdot (a_s - a_{s'}) + (r - r')$. Assume first that $r \neq r'$. For
every prime divisor $p$ of $d$ (and therefore for every prime $p \in
R$), the number $\Delta$ cannot be divisible by $p$ because $0 < |r
- r'| < p_1$. If $p \in Q$, we have $x_{rs} - x_{r's'} \equiv (s - s') \cdot p_1 + (r - r')
\pmod p$. The residue $|(s - s') \cdot p_1 + (r - r')| \leqslant
(p_i - 1) \cdot p_1 + (p_1 - 1) < p_i \cdot p_1 < q$ is less than
$p$ and never equal to zero - which means that the greatest common
divisor of $\Delta$ and $n$ equals $1$. In other case, we have $r =
r'$. Again, for an arbitrary prime $p \in Q$, the residue of
$\Delta$ modulo $p$ is $(s - s') \cdot p_1$, which is never equal to
$0$. When $p$ is a prime number from $R$, by definition the
difference $a_s - a_{s'}\equiv (s-s') \pmod p$ can not be divisible
by $p$ according to $0 < |s - s'| < p_i \leqslant p$. Therefore, in
this case we have $gcd (\Delta, n) = d$. \rz

When $Q$ is an empty set, we can use the same construction to get a
clique of size $p_1 \cdot p_i$.
\end{proof}

\begin{thm}
\label{klika q} If $q < p_1 \cdot p_i$, then
$$
\omega (X_n (1, d)) = q.
$$
\end{thm}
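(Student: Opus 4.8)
The plan is to sandwich $\omega(X_n(1,d))$ between $q$ from above and $q$ from below. The upper bound $\omega(X_n(1,d))\leqslant q$ is already available: it is exactly the content of Lemma \ref{p ne deli d}, since $q=\min_{p\in Q}p$. So the entire work is in producing a clique of size $q$ in $X_n(1,d)$ under the hypothesis $q<p_1\cdot p_i$. Here $p_i=f(n/d)$ and $p_1=f(n)$, so $q$ is a prime dividing $n$ but not $d$, and $q<p_1 p_i$ forces $q$ to be reasonably small; in particular note $q\geqslant p_1$ is impossible to rule out in general, but $q$ could be any prime divisor of $n$ coprime to $d$.

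First I would reuse the parametrisation from the proof of Theorem \ref{klika p1pi}. There the vertices were $x_{rs}=a_s\cdot d+r$ with $0\leqslant s<p_i$, $0\leqslant r<p_1$, giving $p_1 p_i$ vertices. Now, since we only need $q<p_1 p_i$ vertices, I would instead index the clique by a single residue $t$ running over $0\leqslant t<q$ and set the vertex to be $v_t=a_t\cdot d+r_t$, where $r_t$ is chosen in $\{0,1,\dots,p_1-1\}$ and $a_t$ is determined by a Chinese-remainder system modulo $M$ (the product of primes in $R\cup Q$), mimicking the two congruence families
\begin{flalign*}
a_t &\equiv t \pmod p \quad\text{for every }p\in R,\\
a_t\cdot d &\equiv r_t \pmod p \quad\text{for every }p\in Q,
\end{flalign*}
where the choice of $r_t$ (and more precisely of the pair $(r_t, t\bmod p_i)$) must be arranged so that the differences $v_t-v_{t'}$ have gcd with $n$ equal to $1$ or $d$ for all $t\neq t'$. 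The natural device: identify $\{0,1,\dots,q-1\}$ injectively with a set of $q$ pairs $(s,r)$ with $0\leqslant s<p_i$, $0\leqslant r<p_1$ — this is possible precisely because $q\leqslant p_1 p_i$ — but one must be careful: a naive identification can make two pairs differ in the $r$-coordinate by a multiple of $p_1$ only when... actually since $r$ ranges over a complete residue system mod $p_1$ of size $p_1$, differences $r-r'$ are never $\equiv 0\pmod{p_1}$ unless $r=r'$, which is exactly what the old proof used. So the key point is: whatever $q$ elements we pick, pick them as genuine pairs $(s,r)\in\{0,\dots,p_i-1\}\times\{0,\dots,p_1-1\}$ (all distinct), and the verification that $\gcd(\Delta,n)\in\{1,d\}$ goes through verbatim as in Theorem \ref{klika p1pi}, because that verification only used $0<|r-r'|<p_1$, $0<|s-s'|<p_i$, and the bound $|(s-s')p_1+(r-r')|<p_i p_1$; the only place $q$ entered was the inequality $p_i p_1<q$, which we no longer have. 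This is the main obstacle: with $q<p_1 p_i$ the residue $(s-s')p_1+(r-r')$ modulo $q$ could now vanish, killing the argument that the edge is red.

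To overcome this I would restrict the clique to a sub-block where the offending residue cannot vanish mod $q$. Concretely: among the primes in $Q$, the constraint coming from $q$ itself is the binding one (any other $p\in Q$ has $p>q$, hence $p\geqslant p_1 p_i>$ the relevant residues, so it causes no trouble, exactly as in Theorem \ref{klika p1pi}). So I would choose the $q$ vertices so that the pairs $(s,r)$ all lie on a single "line" modulo $q$: pick them so that $s\cdot p_1+r$ runs over $0,1,\dots,q-1$ as $t$ does, i.e. set $r_t\equiv t$, $s_t=\lfloor t/p_1\rfloor$ — no, cleaner: take $t\mapsto (s,r)$ with $sp_1+r=t$, which is a bijection from $\{0,\dots,q-1\}$ onto the first $q$ lattice points provided $q\leqslant p_i p_1$, and is forced to have all $s<p_i$ and all $r<p_1$. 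Then for $t\neq t'$ the residue $(s-s')p_1+(r-r')=t-t'$, which is nonzero mod $q$ since $0<|t-t'|<q$. Thus the red-edge case is saved. For the blue-edge case ($p\nmid\Delta$ for all $p\mid d$) the old computation only needed $0<|r-r'|<p_1$ and $0<|s-s'|<p_i$, so I must still double-check that in this new indexing we never have $r_t=r_{t'}$ with $s_t\neq s_{t'}$ simultaneously in a way that breaks divisibility by a prime in $R$; but since $a_t\equiv t\pmod p$ for $p\in R$ and $0<|t-t'|<q\leqslant p_i p_1$, I would need $q\leqslant p$ for each $p\in R$, which is not given — so instead I keep $a_t\equiv s_t\pmod p$ for $p\in R$ as in Theorem \ref{klika p1pi} and verify $0<|s_t-s_{t'}|<p_i$ separately, handling the sub-case $s_t=s_{t'}$ (then $r_t\neq r_{t'}$, and one checks the $R$-primes via the $d\mid a_t d-a_{t'}d$ style argument) exactly as before. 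Assembling these pieces gives a clique of size $q$, matching the upper bound, so $\omega(X_n(1,d))=q$.
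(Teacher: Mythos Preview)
Your proposal is correct and converges to exactly the paper's construction: upper bound from Lemma~\ref{p ne deli d}, then vertices $v_t=a_t d+r_t$ for $0\leqslant t<q$ with $r_t=t\bmod p_1$, $s_t=\lfloor t/p_1\rfloor$, and $a_t$ determined by CRT via $a_t\equiv s_t\pmod p$ for $p\in R$ and $a_t d+r_t\equiv t\pmod p$ for $p\in Q$. Two small points to clean up: the congruence system you first display ($a_t\equiv t$ for $p\in R$, $a_t d\equiv r_t$ for $p\in Q$) is not the one you actually use later, so state the corrected version up front; and the parenthetical claim that ``any other $p\in Q$ has $p>q$, hence $p\geqslant p_1 p_i$'' is false in general---what you really need (and have) is that the residue of $\Delta$ modulo any $p\in Q$ equals $t-t'$, and $|t-t'|<q\leqslant p$.
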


\begin{proof}
According to Lemma \ref{klika q} it is enough to find $q$ vertices
that form clique in the graph $X_n (1, d)$. Define numbers $x_k =
a_k \cdot d + b_k$ for $k = 0, 1, \ldots, q - 1$, where $b_k$ is the
residue of $k$ modulo $p_1$ and the following conditions are
satisfied:
\begin{flalign*}
a_k \cdot d + b_k &\equiv k \quad \ \ \ \ \pmod p \quad \quad \mbox
{ for every }
p \in Q \\
a_k &\equiv \lfloor k / p_1 \rfloor \pmod p \quad \quad \mbox { for
every } p \in R
\end{flalign*}

Numbers $a_k$ can be uniquely determined using Chinese Reminder
Theorem modulo $M$, because $d$ and $p$ are relatively prime, for
every prime number from $Q$. We will prove that the greatest common
divisor of $x_k - x_{k'}$ and $n$ is always equal to $d$ or $1$, which
would complete the proof. For every $p \in Q$, we have that $x_k -
x_{k'}\equiv k-k'\pmod p$. Since $|k - k'| < q$, $x_k - x_{k'}$ can
not be divisible by $p$. \rz

Next, consider the case when $k$ and $k'$ have the same residue
modulo $p_1$ and $k \neq k'$. If $a_k - a_{k'}$ is divisible by some
$p \in R$, this means that we have also $\lfloor k / p_1 \rfloor
\equiv \lfloor k' / p_1 \rfloor \pmod p$. Since $k$ is less than
$p_1 \cdot p_i$, we conclude that the integer parts of numbers
$\frac{k}{p_1}$ and $\frac{k'}{p_1}$ are equal. Together with the
assumption that fractional parts of these numbers are the same, we
get that $k = k'$. This is a contradiction and therefore $gcd (x_k -
x_{k'}, n) = d$. In the second case, we have that number $x_k -
x_{k'}$ is not divisible by any $p \in R$, because $d$ is
divisible by $p$ and $0 < |b_k - b_{k'}| < p_1$. Thus, we have
$gcd (x_k - x_{k'}, n) = 1$ which completes the proof.
\end{proof}

Finally we reach the following main result of this subsection:

\begin{thm}
\label{main k=2} For any divisor $d$ of $n$, there holds:
$$\omega (X_n (1, d)) =  \min \ \left (
\min_{p \in Q} p, f \left( n \right) \cdot f \left ( \frac
{n}{d}\right) \right).$$
\end{thm}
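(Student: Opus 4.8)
The plan is to combine the three preceding lemmas and theorems into a single clean statement by a simple case analysis on whether the set $Q$ is empty and, when it is not, on how its least element $q$ compares with the product $p_1 \cdot p_i = f(n) \cdot f(n/d)$. In all cases Lemma \ref{inequality p1pi} gives the upper bound $\omega(X_n(1,d)) \leqslant f(n)\cdot f(n/d)$, and Lemma \ref{p ne deli d} gives $\omega(X_n(1,d)) \leqslant \min_{p\in Q} p = q$ whenever $Q \neq \emptyset$; together these yield $\omega(X_n(1,d)) \leqslant \min\bigl(\min_{p\in Q} p,\ f(n)\cdot f(n/d)\bigr)$ unconditionally (with the convention that the minimum over an empty set is $+\infty$, so the bound reads $f(n)\cdot f(n/d)$ when $Q=\emptyset$). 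So the whole content of the theorem is the matching lower bound.

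For the lower bound I would split into exactly the cases treated above. First, if $Q = \emptyset$ or $q > p_1\cdot p_i$, then $\min\bigl(\min_{p\in Q}p,\ p_1 p_i\bigr) = p_1 p_i$, and Theorem \ref{klika p1pi} already gives $\omega(X_n(1,d)) = p_1\cdot p_i$, so equality holds. Second, if $q < p_1\cdot p_i$, then $\min\bigl(q,\ p_1 p_i\bigr) = q$, and Theorem \ref{klika q} gives $\omega(X_n(1,d)) = q$, again matching. The only remaining possibility is $q = p_1\cdot p_i$; in that case the two upper bounds coincide and both equal $p_1 p_i$, and either of the two constructions (the one in Theorem \ref{klika p1pi}, whose validity only required $q \geqslant p_1 p_i$ via the strict inequality $p_i p_1 < q$ being replaced by $p_i p_1 \leqslant q$ in the relevant estimate, or a direct check) produces a clique of that size; so $\omega = p_1 p_i = q = \min(q, p_1 p_i)$. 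Assembling these subcases gives the stated formula.

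The main obstacle I anticipate is the boundary case $q = p_1 \cdot p_i$, since Theorem \ref{klika p1pi} is phrased with the hypothesis $q > p_1 p_i$ and Theorem \ref{klika q} with $q < p_1 p_i$, so the paper as stated leaves a gap exactly on the diagonal. I would close it by re-examining the construction in the proof of Theorem \ref{klika p1pi}: the only place the hypothesis $q > p_1 p_i$ is used is the chain $|(s-s')p_1 + (r-r')| \leqslant (p_i-1)p_1 + (p_1-1) = p_i p_1 - 1 < p_i p_1 \leqslant q \leqslant p$, which remains valid when $q = p_1 p_i$ because the strict inequality $p_i p_1 - 1 < p_i p_1$ already does the work and we only need the residue to be nonzero and of absolute value less than $p$. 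Hence the same $p_1 p_i$ vertices form a clique, giving $\omega(X_n(1,d)) \geqslant p_1 p_i = q$, while the upper bounds give $\omega(X_n(1,d)) \leqslant p_1 p_i$; equality follows. With this observation the proof is just: \textbf{(i)} record the two upper bounds from Lemmas \ref{inequality p1pi} and \ref{p ne deli d}; \textbf{(ii)} invoke Theorem \ref{klika p1pi} (extended as above) when $q \geqslant p_1 p_i$ or $Q = \emptyset$; \textbf{(iii)} invoke Theorem \ref{klika q} when $q < p_1 p_i$; \textbf{(iv)} observe the cases are exhaustive and in each the value equals $\min\bigl(\min_{p\in Q}p,\ f(n)\cdot f(n/d)\bigr)$.
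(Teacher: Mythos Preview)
Your proposal is correct and matches the paper's approach exactly: in the paper Theorem~\ref{main k=2} is stated without a separate proof, being simply the conjunction of Lemmas~\ref{inequality p1pi} and~\ref{p ne deli d} with Theorems~\ref{klika p1pi} and~\ref{klika q}. Your careful handling of the boundary case $q = p_1 p_i$ is in fact unnecessary, since $q$ is a prime while $p_1 p_i = f(n)\cdot f(n/d)$ is a product of two primes each at least $2$ and hence composite; thus this case can never occur and the dichotomy $q > p_1 p_i$ versus $q < p_1 p_i$ (together with $Q=\emptyset$) is already exhaustive.
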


\subsection{ Case $1 \not \in D$ }

\begin{thm}
Let $X_n (d_1, d_2)$ be a gcd-graph with both divisors
greater than one. Then the following equality holds:
\begin{eqnarray*}
\omega \left( X_n (d_1, d_2) \right) = \left\{
\begin{array}{rl}
\omega (X_{\frac{n}{d_2}} (1, \frac{d_1}{d_2})), & \mbox{ if $d_2 \mid d_1$, } \\
\max \left ( f \left( \frac {n}{d_1}
\right), f \left(\frac {n}{d_2} \right) \right ), & \mbox{ otherwise. }\\
\end{array} \right.
\end{eqnarray*}
\end{thm}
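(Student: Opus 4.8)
The plan is to reduce the two remaining cases to results already proved. Suppose first that $d_2 \mid d_1$ and $d_2 > 1$. The key observation is that the map $x \mapsto x/d_2$ on the set of multiples of $d_2$ is essentially a scaling, and that every edge of $X_n(d_1,d_2)$ lies inside one residue class modulo $d_2$: if $\gcd(a-b,n) \in \{d_1,d_2\}$ then $d_2 \mid a-b$. Hence the connected components — and in particular every clique — are confined to cosets of the subgroup $d_2 \mathbb{Z}_n$. Restricting to the coset of multiples of $d_2$ and dividing through by $d_2$, two vertices $d_2 a$ and $d_2 b$ satisfy $\gcd(d_2 a - d_2 b, n) = d_2 \cdot \gcd(a-b, n/d_2)$, so this gcd lies in $\{d_1, d_2\}$ exactly when $\gcd(a-b, n/d_2) \in \{d_1/d_2, 1\}$. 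This gives a graph isomorphism from the component of $X_n(d_1,d_2)$ containing $0$ onto $X_{n/d_2}(1, d_1/d_2)$, and by Theorem \ref{d components} all components are isomorphic, so $\omega(X_n(d_1,d_2)) = \omega(X_{n/d_2}(1, d_1/d_2))$, which is what the first branch asserts. The value can then be read off from Theorem \ref{main k=2}.

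Now suppose $d_2 \nmid d_1$ (and by symmetry $d_1 \nmid d_2$, which is automatic since $d_1 > d_2$). The lower bound $\omega(X_n(d_1,d_2)) \geqslant \max\big(f(n/d_1), f(n/d_2)\big)$ is already recorded in inequality (\ref{lower bound}). For the matching upper bound, I would argue as in Lemma \ref{inequality p1pi}: color edges with $\gcd = d_1$ blue and those with $\gcd = d_2$ red, and note that two blue edges sharing a vertex force the third edge of the triangle (if present) to be blue — because $d_1 \mid a-b$ and $d_1 \mid a-c$ give $d_1 \mid b-c$, and the gcd cannot then be $d_2$ since $d_1 \nmid d_2$ would be needed. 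The same holds with the colors reversed, using $d_2 \nmid d_1$. Therefore in a maximal clique $C^\ast$ both the blue subgraph and the red subgraph are disjoint unions of cliques, and moreover a blue clique and a red clique can share at most one vertex. The crucial extra input over the $1 \in D$ case is that a monochromatic blue clique has size at most $f(n/d_1)$ and a monochromatic red clique at most $f(n/d_2)$ by Lemma \ref{unconnected unitary}.

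To finish, I would show the clique cannot mix colors nontrivially: if $C^\ast$ contained a blue edge $\{a,b\}$ and a red edge $\{a,c\}$ incident to a common vertex $a$, consider the edge $\{b,c\}$. Its color is blue forces $d_1 \mid b-c$; combined with $d_2 \mid c-a$ (wait — rather) one derives a divisibility constraint on $d_1$ and $d_2$ that contradicts $d_2 \nmid d_1$, $d_1 \nmid d_2$. Hence no vertex of $C^\ast$ is incident to edges of both colors, so $V(C^\ast)$ splits as a disjoint union of vertex sets of monochromatic cliques with no edges between distinct blocks — but $C^\ast$ is a clique, so there is only one block, and $|C^\ast| \leqslant \max\big(f(n/d_1), f(n/d_2)\big)$. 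The main obstacle is precisely this last step: carefully checking that the three-way divisibility relations among $d_1$, $d_2$, $b-c$, $a-b$, $a-c$ and $n$ leave no room for a mixed triangle when neither $d_i$ divides the other. This is a finite case analysis on which prime powers divide $\gcd(b-c,n)$, and I expect it to go through, but it is the delicate point and the reason the "otherwise" branch is clean.
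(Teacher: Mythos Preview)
Your approach matches the paper's. For the case $d_2 \mid d_1$, the paper invokes Theorem~\ref{d components} exactly as you do to reduce to $X_{n/d_2}(1,d_1/d_2)$; for the case $d_2 \nmid d_1$, the paper also shows the maximal clique must be monochromatic by analyzing a mixed triangle.

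Your only hesitation --- the ``delicate point'' about ruling out a mixed triangle --- is in fact immediate and requires no prime-power case analysis. With $\gcd(a-b,n)=d_1$ and $\gcd(a-c,n)=d_2$, the third edge $\{b,c\}$ has $\gcd(b-c,n)\in\{d_1,d_2\}$. If it equals $d_1$, then $d_1\mid (a-b)-(b-c)=a-c$ (up to sign), whence $d_1\mid\gcd(a-c,n)=d_2$, impossible since $d_1>d_2$. If it equals $d_2$, then $d_2\mid (a-c)-(b-c)=a-b$, whence $d_2\mid\gcd(a-b,n)=d_1$, contradicting $d_2\nmid d_1$. This is exactly the paper's two-line argument; no further work is needed.
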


\begin{proof}
If a maximal clique has edges of both colors, then there exists a
non-monochromatic triangle. Therefore, we can find vertices $a$,
$b$, $c$ such that:
$$
gcd(a - b, n) = d_1, \quad gcd(a - c, n) = d_2, \quad gcd(b - c, n) = d_2
$$
By subtraction, we get that $d_2 \mid (a - c) - (b - c)$ and finally
$d_2$ divides $d_1$. We excluded the case with two blue edges,
because than $d_1$ would divide $d_2$, which is impossible.
Therefore, the graph $X_n$ is disconnected according to Theorem~\ref{d components} and we obtained an
equivalent problem for the divisor set $D' = \{1, \frac{d_1}{d_2} \}$
and gcd-graph $X_{n / d_2}(D)$. \rz

In the other case ($d_2$ does not divide $d_1$), the maximal clique is monochromatic
and by Theorem \ref{divisorOne} we completely determine $\omega (X_n (d_1, d_2))$.
\end{proof}

\section{Counterexamples}

In order to test the conjecture proposed in \cite{klotz07} for
integral circulant graphs with more then two divisors, we
implemented \emph{Backtrack Algorithm with pruning} \cite{www} for
finding the clique number. For $k = 3$ and $k = 4$, we construct
infinite families of integral circulant graphs, such that clique
number does not divide $n$. For example, we obtain that
$\omega(X_{20} (1, 4, 10)) = 6$ and $\omega (X_{30} (1, 2, 6, 15)) =
7$ which is verified by an exhausted search algorithm. The next
proposition in theoretic way disproves the conjecture.

\begin{pro}
The clique number of integral circulant graph $X_{20} (1, 4, 10)$
equals $6$ or $7$.
\end{pro}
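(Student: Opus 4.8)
The plan is to establish the two claims separately: first an upper bound $\omega(X_{20}(1,4,10)) \leqslant 7$, then a lower bound $\omega(X_{20}(1,4,10)) \geqslant 6$ by exhibiting an explicit clique on six vertices. Since $20 = 2^2 \cdot 5$, the divisor set is $D = \{1, 4, 10\}$, and two vertices $a, b$ are adjacent iff $\gcd(a-b, 20) \in \{1, 4, 10\}$; note that $2$ divides $20$ but $2 \notin D$, so no two vertices of a clique may be congruent modulo $2$ unless their difference is a multiple of $4$ — this parity observation will be the workhorse for the upper bound.

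For the upper bound, I would argue as in Lemma~\ref{p ne deli d}: in any clique, partition the vertices by parity. Two even vertices $a, b$ must satisfy $\gcd(a-b,20) = 4$ (since $2 \mid a-b$ rules out $1$, and $10 \nmid a-b$ when... actually $a-b$ even could be a multiple of $10$, so $\gcd \in \{4, 10\}$ is allowed among evens); two odd vertices have $a - b$ even as well, so again $\gcd(a-b,20) \in \{4, 10\}$; while an even–odd pair has $a-b$ odd, forcing $\gcd(a-b,20) = 1$. So the clique splits into an even part $E$ and an odd part $O$, each of which is itself a clique in $X_{20}(4,10)$, and the bipartite graph between them is complete with all differences coprime to $20$. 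By Theorem~\ref{d components} applied to the set $\{4,10\}$ (gcd $2$), $X_{20}(4,10)$ decomposes into $2$ components isomorphic to $X_{10}(2,5)$, so by the earlier two-divisor analysis $\omega(X_{20}(4,10))$ is small; combining $|E| \leqslant \omega(X_{20}(4,10))$, $|O| \leqslant \omega(X_{20}(4,10))$ with the coprimality constraint between the parts (which, as in Lemma~\ref{inequality p1pi}, forces $|E| + |O| \leqslant f(20) \cdot (\text{something})$ via choosing representatives) should pin the total down to at most $7$. The main obstacle is getting these two bounds to interlock tightly enough: a crude sum $|E| + |O|$ could overshoot, so one needs the mixed argument — picking one vertex from each monochromatic sub-clique and noting they form a coprime-difference clique bounded by the smallest prime $f(20) = 2$ — carried out carefully with the actual component structure, exactly the bookkeeping in the proof of Lemma~\ref{inequality p1pi}.

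For the lower bound, I would produce six concrete vertices of $\{0, 1, \ldots, 19\}$ whose pairwise differences all have gcd with $20$ in $\{1,4,10\}$; a natural candidate is a set like $\{0, 1, 4, 5, 10, 15\}$ or similar, tuned so that even–even differences are $\pm 4, \pm 10$ or multiples thereof hitting gcd $4$ or $10$, and even–odd differences are odd (hence coprime to $20$). One checks the $\binom{6}{2} = 15$ differences directly — this is a finite verification, not deep. The delicate point is choosing the six residues so that no difference is a multiple of $2$ without being a multiple of $4$, and no odd difference shares the factor $5$ with $20$; this is a small search but must be done explicitly. With the upper bound $\leqslant 7$ and the lower bound $\geqslant 6$ in hand, the proposition follows immediately, and the accompanying remark that an exhaustive algorithm gives the exact value $6$ shows which of the two alternatives holds.
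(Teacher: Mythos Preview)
Your plan has the right shape but two concrete gaps. First, the candidate clique $\{0,1,4,5,10,15\}$ fails immediately: $\gcd(5-0,20)=5\notin D$ (and likewise $15-0$, $15-10$), so you must actually produce a working six-set; the paper uses $\{0,1,4,8,11,12\}$. Second, and more seriously, the parity split by itself only gives $|E|,|O|\leqslant \omega(X_{20}(4,10))=\omega(X_{10}(2,5))=5$, hence $|C|\leqslant 10$. Your appeal to the Lemma~\ref{inequality p1pi} bookkeeping does not close this: that argument needs red edges between the monochromatic sub-cliques, but within $E$ (and within $O$) every edge is blue or green --- there are no red edges to exploit there. What you actually need from the two-divisor analysis is not just the value of $\omega(X_{10}(2,5))$ but the stronger structural fact proved there: since $2\nmid 5$ and $5\nmid 2$, \emph{every} clique in $X_{10}(2,5)$ is monochromatic. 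Hence $E$ is entirely blue or entirely green, and likewise $O$. Now a short case split finishes: both blue $\Rightarrow C\subseteq X_{20}(1,4)$ so $|C|\leqslant 5$; both green $\Rightarrow |C|\leqslant 4$; one of each $\Rightarrow |C|\leqslant 5+2=7$; one side a singleton $\Rightarrow |C|\leqslant 6$.

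For comparison, the paper does not split by parity; it argues directly that no triangle can carry both a $\gcd=4$ edge and a $\gcd=10$ edge (the third side would be even but divisible by neither $4$ nor $5$), which is exactly the same structural fact as ``cliques in $X_{20}(4,10)$ are monochromatic''. From there the paper decomposes the maximal clique into blue sub-cliques and green sub-cliques joined only by red edges, notes that one representative from each gives a red clique of size $\leqslant f(20)=2$, rules out the two-colour cases via $\omega(X_{20}(1,4))=5$ and $\omega(X_{20}(1,10))=4$, and concludes with one blue block ($\leqslant 5$) plus one green block ($\leqslant 2$). Your parity route and the paper's colour route are equivalent once the monochromaticity/no-mixed-triangle fact is made explicit; that is the step you should not leave implicit.
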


\begin{proof}
The vertices $0, 1, 4, 8, 11, 12$ form a clique in considered graph,
which is easy to check. We color edges in $X_{20} (1, 4, 10)$ with
three colors: \emph{red} if $gcd(a - b, 20) = 1$, \emph{blue} if
$gcd(a - b, 20) = 4$, and \emph{green} if $gcd(a - b, 20) = 10$.\rz

By Theorem \ref{divisorOne} and Lemma \ref{unconnected unitary}, the
maximal clique with red edges has $2$ vertices, the maximal clique
with blue edges has $5$ vertices, and for green color it has $2$
vertices. If a triangle has one blue and one green edge - it follows
that the third edge cannot be red, because of parity. If the third
edge is blue, than the absolute value of the difference on the green
edge is divisible by $4$, which is impossible. Likewise, if the
third edge is green than the absolute value of the difference on the
blue edge is divisible by $5$. Therefore, there is no triangle in
graph which contain both blue and green edges. \rz

Assume that the maximal clique is two-colored. By previous
consideration maximal clique can contain red and blue edges or red
and green edges. In the first case our problem is to find the
maximal clique in integral circulant graph $X_{20} (1, 4)$. Applying
Theorem \ref{klika q} we conclude that $\omega(X_{20}(1,4))=5$, but
we already found a clique of size $6$. Analogously, the size of
maximal clique with red and green edges is $\omega(X_{20} (1,10)) =
4$ by Theorem \ref{klika p1pi}. It means that maximal clique must
contain all three colors. \rz

Now, the maximal clique is three-colored and consists of $x$ cliques
with blue edges and $y$ cliques with green edges. Using mentioned
fact that there is no triangle with blue and green edges, we can
easily notice that only red edges join these $x + y$ cliques. If we
choose one vertex from each clique, we obtain a red edge clique with
$x + y$ vertices. But, the maximal clique with red edges has only
two vertices, implying that $x = y = 1$. So, the upper bound for the
clique number is $2 + 5 = 7$ and the lower bound is $6$, and neither
of them is a divisor of $20$.
\end{proof}

\begin{pro}
Let $X_n(D)$ be the integral circulant graph with the set of
divisors $D=\{d_1,d_2,\ldots, d_k\}$. If $N=n\cdot p$ where $p$ is
an arbitrary prime number greater then $n$, then the following
equality holds
$$
\omega(X_N(D))=\omega(X_n(D)).
$$
\end{pro}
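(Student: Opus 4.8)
The key observation is that the divisors in $D$ are all proper divisors of $n$, and since $p>n$ is a new prime not dividing any $d_i$, the divisor set $D$ consists of divisors of $N=np$ as well, with $\gcd(d_i,p)=1$ for all $i$. The plan is to show the two clique numbers coincide by proving each is at most the other. First I would establish $\omega(X_N(D))\geqslant\omega(X_n(D))$: given a clique in $X_n(D)$ on vertices $v_1,\dots,v_m\in Z_n$, I would check that the very same integers, viewed now in $Z_N$, still form a clique. This is because for $v_i,v_j<n$ the difference satisfies $|v_i-v_j|<n<p$, so $\gcd(v_i-v_j,N)=\gcd(v_i-v_j,np)=\gcd(v_i-v_j,n)$ (the factor $p$ cannot divide $v_i-v_j$), and this lies in $D$ exactly as before.

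\textbf{The reverse inequality} is the substantive direction. Suppose $C=\{w_1,\dots,w_M\}$ is a clique in $X_N(D)$. Since every difference $w_i-w_j$ has gcd with $N$ lying in $D$, and no element of $D$ is divisible by $p$ (as $d_i\mid n$ and $p\nmid n$), it follows that $p\nmid(w_i-w_j)$ for all $i\neq j$; hence the $w_i$ are pairwise distinct modulo $p$, which forces $M\leqslant p$ — but that bound is too weak on its own. The real point is that $p\nmid(w_i-w_j)$ implies $\gcd(w_i-w_j,N)=\gcd(w_i-w_j,n)$. So if I reduce each $w_i$ modulo $n$ to get $\bar w_i\in Z_n$, then $\gcd(\bar w_i-\bar w_j,n)=\gcd(w_i-w_j,n)=\gcd(w_i-w_j,N)\in D$, provided the $\bar w_i$ are still pairwise distinct. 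The main obstacle is exactly this: I must rule out $w_i\equiv w_j\pmod n$ with $w_i\neq w_j$. If that happened, then $n\mid(w_i-w_j)$, so $\gcd(w_i-w_j,N)$ is a multiple of $n$; but the only element of $D$ that is a multiple of $n$ would have to be $n$ itself, which is excluded since all $d_i$ are \emph{proper} divisors of $n$. (More carefully, $\gcd(w_i-w_j,N)=\gcd(w_i-w_j,np)$ is a multiple of $n$ and a divisor of $np$, hence equals $n$, contradicting $d_i\leqslant n/2$.) Therefore the reduction map $C\to Z_n$ is injective and its image is a clique in $X_n(D)$ of size $M$, giving $\omega(X_n(D))\geqslant\omega(X_N(D))$.

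Combining the two inequalities yields the equality. The only slightly delicate point to state cleanly is the ``proper divisor'' hypothesis: I would make explicit at the start that every $d_i$ satisfies $d_i\leqslant n/2$ (equivalently $d_i\neq n$), since that is what prevents a difference divisible by $n$ from being admissible and is the linchpin of the injectivity argument. No deeper tool than elementary gcd manipulation and the condition $p>n$ is needed; the proof is essentially a reduction-modulo-$n$ bijection between cliques.
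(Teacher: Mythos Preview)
Your proposal is correct and follows essentially the same approach as the paper: embed a clique of $X_n(D)$ into $X_N(D)$ using the same vertex labels, and for the reverse direction reduce a clique in $X_N(D)$ modulo $n$, using that $p\nmid(w_i-w_j)$ to conclude $\gcd(w_i-w_j,N)=\gcd(w_i-w_j,n)$. If anything, you are more explicit than the paper about the injectivity of the reduction map (the paper leaves the case $b_i=b_j$ implicit in its gcd chain), so your write-up is slightly cleaner in that respect.
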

\begin{proof}
Since $p > n$ and $gcd (a - b, n) = gcd (a - b, p \cdot n)$ for
arbitrary vertices $a, b \in X_n (D)$, we have inequality
$\omega(X_N(D))\geqslant\omega(X_n(D))$. Now, assume that vertices
$\{a_1, a_2, \ldots, a_c\}$ form a maximal clique in $X_N (D)$ and
consider vertices $\{b_1, b_2, \ldots, b_c\}$ in graph $X_n (D)$,
where $b_i$ is the remainder of $a_i$ modulo $n$. Prime number $p$
cannot divide any of the numbers $d_i$ and thus $a_i-a_j$ is not
divisible by $p$ for every $1\leqslant i<j\leqslant c$. Now, we have
$$
gcd (b_i - b_j, n) = gcd (a_i - a_j, n) = gcd (a_i - a_j,N)\in D.
$$
This means that $\omega(X_N(D))\leqslant\omega(X_n(D))$ and finally
$\omega(X_N(D)) = \omega(X_n(D))$.
\end{proof}

Using this proposition we obtain a class of counterexamples $X_{20
 p} (1, 4, 10)$ based on the graph $X_{20} (1, 4, 10)$, where
$p$ is a prime number greater then $20$.

\section{Concluding remarks}

In this paper we moved a step towards describing the clique number
of integral circulant graphs. We find an explicit formula for the
clique number of $X (d_1, d_2, \ldots, d_k)$ when $k \leqslant 2$.
This leads us to the main result of the paper - discarding a
conjecture proposed in \cite{klotz07}, that the clique number
divides $n$. We constructed families of counterexamples for $k = 3$
and $k = 4$. \rz

We also examine numerous examples for $k \geqslant 2$ divisors and
obtain the following inequality:
$$\max_{d_i \in D} \ f \left ( \frac{n}{d_i} \right ) \leqslant \omega (X_n (D))
\leqslant \prod_{i=1}^k f \left ( \frac {n} {d_i} \right ).$$

The lower bound follows from Lemma \ref{unconnected unitary}, and
the upper bound can be proven by induction on $k$. Namely, after
adding edges $\{a, b\}$ such that $gcd (a - b, n) = d_i$ into the
graph $X_n (d_1, d_2, \ldots, d_{i - 1})$, we divide every color
class in maximum $f (\frac{n}{d_i})$ independent parts. Therefore,
the number of color classes is less than or equal to the product of
numbers $f (\frac{n}{d_i})$ for all $d_i \in D$. \rz

We leave for future study to see whether this bound can be improved.
\rz

{\bf Acknowledgement. } The authors are grateful to anonymous referee
for comments and suggestions, which were helpful in improving the manuscript.

\end{document}